\newtheorem{theorem}{Theorem}[section]
\newtheorem{lemma}[theorem]{Mini Lemma}
\newtheorem*{theorem*}{Theorem}
\newtheorem*{lemma*}{Lemma}
\newtheorem*{fact*}{Fact}
\newtheorem*{claim*}{Claim}
\newtheorem*{ConjLemma*}{Conjugation Lemma}
\theoremstyle{remark}
\newtheorem{remark}[theorem]{Remark}
\newtheorem*{example*}{Example}
\newcommand{\MathRoman}[1]{\mathop{\mathrm{#1}}\nolimits}
\newcommand{\mathcan}[1]{\mathbb{#1}}
\newcommand{\FF}{\mathcan{F}}
\newcommand{\NN}{\mathcan{N}}
\newcommand{\ZZ}{\mathcan{Z}}
\newcommand{\fp}{{\mathbb F}_ p}
\newcommand{\fps}{{\mathbb F}^*_ p}
\newcommand{\fqs}{{\mathbb F}^*_ q}
\newcommand{\GG}{{\bf G}}
\newcommand{\Out}{{\rm Out}}
\newcommand{\f}{\mathbb F}
\newcommand{\Fq}{\mathop{\FF_{q}}\nolimits}
\renewcommand{\mod}{\MathRoman{mod}}
\begin{document}

% Title
% -----

\title{A Short Note on Discrete Log Problem in $\fps$}

\author{Habeeb Syed}

\address{Information Security Group
\newline \indent
Computational Research Laboratories Limited
\newline \indent
Pune,  411 016 - INDIA}
\email{habeeb@crlindia.com}

\subjclass{12E20, 94A60, 11Y16}

\begin{abstract}
Let $p$ be a odd prime such  that $2$ is a primitive element of finite
field  $\fp$.  In this  short  note we  propose  a  new algorithm  for the
computation of discrete logarithm in $\fp^*$.  This algorithm is based on elementary properties of finite fields and is
purely theoretical in nature.

\end{abstract}

\keywords{Finite Fields, Discrete Log Problem}

\maketitle

%\newpage

%\bigskip \bigskip \bigskip \bigskip

%\tableofcontents

%\newpage

% body of the article
% -------------------
\vskip-1cm
\section*{Introduction}
\label{Intro}
Consider a finite field $\Fq$ (also denoted by ${\rm GF}(q)$), where $q=p^r$, $p$ is a prime and $r\in \NN:=\{1,2,3,\ldots \}$.
Let $\alpha$ be a primitive element of $\Fq$ i.e., generator of the multiplicative cyclic group $\fqs$.
For arbitrary  element  $b  \in  \fqs$
computing $n\in \NN, n \leq q-1$ such that
\begin{equation}
 \label{Eqn}
b = \alpha^n\;\mod p
\end{equation}
is known as  \textit{discrete log problem} (DLP) in  $\fqs$. 
Discrete log computation in finite fields is an important problem mainly due to applications of these groups in 
cryptography. Beginning
with  Diffie-Hellman  key  exchange  protocol \cite{DifHel},  El  ElGamal
encryption/signature scheme \cite{ElGamal} the  DLP in $\fqs$ has been
used as  basic mathematical  primitive in many  cryptographic schemes,
and security of these systems  depend  on difficulty of DLP in respective
$\fqs$. It  is rather difficult to  give even reasonably  good list of
references  to   all  the  work   involving  DLP  in   $\fqs,$  however
\cite{Mov,OdlOdl} are good to begin with.

In the last couple of decades DLP in $\fqs$ has been studied extensively
and  several algorithms have  been proposed  for the  computation same.
Most efficient algorithm for the computation of DLP is the one based
on Number Field Sieve \cite{Gordon,Schi}.  See  also
\cite{CopShp,MulWhi}  for   results  which  are   not  computationally
oriented  but  certainly give  insight  into  the  problem of  DLP  in
$\fqs$. In this short note we are focused on odd primes $p$ for which
$2$ is  primitive element of $\fp$.  For such primes we  propose a new
algorithm  to compute  discrete logarithm  in $\fps$.  The proposed
algorithm is based on elementary properties of finite fields and is purely theoretical in nature.
Further,  complexity of the algorithm is exponential, and as such it is not being 
suggested for any computational purposes. 
This short note has two sections. In section \ref{Algo} we begin with
basic results needed  and then explain the algorithm in  detail. In section
\ref{Analysis}  we analyze the  complexity of  the algorithm.

\section{The Algorithm}
\label{Algo}
In  reminder of this note  $p$  denotes   odd  prime  and  $r  \in  \NN$.   By
$\log_\alpha  b=n$  we mean  $n$  as  in  \eqref{Eqn}. We  begin  with
following simple results.

\begin{lemma}
 \label{TheLemma}
Let $a,b  \in \fqs,\;(q=p^r)$ be  such that $a+b=0(\mod p),$  then for
any primitive element $\alpha$ of  $\Fq$ we have,
$$ \log_\alpha  a - \log_\alpha  b = \log_\alpha  b - \log_\alpha  a =
\frac{q-1}{2}\; \mod (q-1).
$$
\end{lemma}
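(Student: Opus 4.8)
The plan is to reduce everything to a single observation: in characteristic $p$ the hypothesis $a+b=0$ means $b=-a$, so the lemma is really a statement about $\log_\alpha(-a)$ versus $\log_\alpha(a)$, and the key is to compute $\log_\alpha(-1)$.

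First I would note that since $p$ is odd, $q=p^r$ is odd and hence $q-1$ is even, so $\frac{q-1}{2}$ is a well-defined integer and $\alpha^{(q-1)/2}$ makes sense. Because $\fqs$ is cyclic of order $q-1$ and $\alpha$ generates it, the element $\alpha^{(q-1)/2}$ has order exactly $2$; but $-1\in\fqs$ also has order $2$ (it is not $1$ because $p\neq 2$), and a cyclic group has a unique element of order $2$. Therefore $-1=\alpha^{(q-1)/2}$, i.e. $\log_\alpha(-1)=\frac{q-1}{2}\ (\mathrm{mod}\ q-1)$.

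Next I would use multiplicativity of the discrete logarithm: from $b=-a$ we get
$$\log_\alpha b \;=\; \log_\alpha(-1)+\log_\alpha a \;=\; \log_\alpha a + \frac{q-1}{2} \ \pmod{q-1}.$$
Hence $\log_\alpha a-\log_\alpha b\equiv -\tfrac{q-1}{2}\pmod{q-1}$, and since $2\cdot\tfrac{q-1}{2}=q-1\equiv 0$, we have $-\tfrac{q-1}{2}\equiv \tfrac{q-1}{2}\pmod{q-1}$. The same computation with the roles of $a$ and $b$ swapped gives $\log_\alpha b-\log_\alpha a\equiv \tfrac{q-1}{2}\pmod{q-1}$, which is exactly the claimed chain of equalities.

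There is essentially no obstacle here; the only points that need a word of justification are the uniqueness of the order-$2$ element in the cyclic group $\fqs$ (to pin down $\log_\alpha(-1)$) and the congruence $-\tfrac{q-1}{2}\equiv\tfrac{q-1}{2}\pmod{q-1}$, both of which are immediate. I would also remark that the notation ``$\mod p$'' in the hypothesis is just the statement $a+b=0$ in $\Fq$.
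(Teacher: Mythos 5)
Your proposal is correct and follows essentially the same route as the paper: both reduce the lemma to the single identity $\log_\alpha(-1)=\frac{q-1}{2} \;(\mod\; q-1)$, the only cosmetic difference being that the paper passes through the ratio $\frac{a}{b}=\frac{b}{a}=-1$ while you write $b=-a$ and use multiplicativity of the discrete logarithm. Your justification of $\log_\alpha(-1)=\frac{q-1}{2}$ via the unique element of order $2$ in the cyclic group $\fqs$ is a welcome bit of extra detail (the paper merely calls it a simple observation and sketches a more general automorphism argument in the subsequent remark), as is your explicit note that $-\frac{q-1}{2}\equiv\frac{q-1}{2} \;(\mod\; q-1)$.
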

\begin{proof}
For any $a,b \in \fqs$ we have,
$$  a+b=0 (\mod  p) \Longleftrightarrow  \frac{a}{b}=\frac{b}{a}  = -1(\mod p).
$$ Computing discrete logarithm  with respect to any  primitive element
$\alpha$ of $\Fq$,  we have,
$$  \log_\alpha\frac{a}{b} =  \log_\alpha\frac{b}{a}= \log_\alpha  a -
\log_\alpha b = \log_\alpha (-1). 
$$
Now the conclusion follows from the simple observation, 
\begin{equation}
 \label{logofminusone}
\log_\alpha (-1) = \frac{q-1}{2}\; \mod (q-1).
\end{equation} \end{proof}  
\begin{remark}
\label{remark}
The result \eqref{logofminusone} is true in more generality: Let $\GG$ be a finite cyclic
group of even order say, $2m.$ Suppose $\alpha$ is a primitive element of $\GG$. It is easy to see that the element 
$\beta=\alpha^m$ is the only non-trivial element fixed by all automorphisms of $\GG$. This implies
that the discrete logarithm of $\beta$ is independent of primitive element $\alpha$ of $\GG$ and is equal to $m$. In case
of $\GG=\Fq$, we have \eqref{logofminusone}.
\end{remark}

The  proposed  algorithm  depends on  above  lemma  and
following simple fact:
\begin{enumerate}
 \item[{\bf Fact 1.}] Let $a,b\in \NN,\;1 <a,b <p$ be such that $a+b=p$,
   then precisely one of $a,b$ is divisible by $2.$
 
\end{enumerate} 
\smallskip 
 Before  we  explain  the  algorithm  we remind  that  this
algorithm  computes $n$  in  \eqref{Eqn} when $p$ is a odd prime  such that   
  $2$ is a primitive element of $\fp$. A  necessary condition for such  a thing to
happen is that $p \equiv \pm 3 (\mod 8)$ \cite[Chap 4]{Baker}.  Next we explain
the proposed algorithm with the help of simple example.
\begin{example*}
 \label{example2}
Consider  the cyclic  group  $\f_{37}^*$ which  is  generated by  $2$.
Suppose we  want to find $\log_2  3$.  Noting that  all the operations
are  performed $\mod 37,$  the proposed algorithm  works as  follows
\newpage 
We have $3+34=37$ and hence
\begin{align*}
 3=-34=  & 2\cdot(-17)=  2 \cdot  20 \\ 
 =  & 2\cdot(4\cdot5)=2^3\cdot
 5=2^3\cdot(-32)\\ =& 2^3\cdot2^5\cdot(-1)=2^8\cdot2^{18}=2^{26} 
\end{align*}
We have $\log_2(3)=26.$
\end{example*}

%\newpage
Now we are ready to state the algorithm.
%\vskip2cm
\begin{algorithm}
\label{TheAlgo}
\caption{}        \flushleft{INPUT:        Element       $b$        of
  $\fps$}\\ \flushleft{OUTPUT: Discrete Log of $b$ to base $2$}\\
\begin{algorithmic}[1]
\STATE  Initialize $\Out  =  0  $ \IF{$b=1$}  \STATE  return 0  \ENDIF
\WHILE{$b\neq 1$} \STATE Find the max  power $k$ of 2 that divides $b$
\IF{   $k=0   $}  \STATE   $b=   p-b$   \STATE  $\Out=   \Out+(p-1)/2$
($\mod\;(p-1)$)     \ELSE      \STATE     $b=     b/(2^k)$,     $\Out=
\Out+k$($\mod\;(p-1)$) \ENDIF \ENDWHILE
\end{algorithmic}
\end{algorithm}

Next we prove that the Algorithm \ref{Algo} converges.
\begin{proof}
\label{proofofalgo}
Suppose  we  want  to  compute  $\log_2  b\;(  b\in  \NN,  1<b<p)$  in
$\fp^*$.   Let   $b=2^rb',$   $b'$   not  divisible   by   $2$,   then
$\log_2b=r+\log_2b'$, and hence  if needed we can replace  $b$ by $b'$
and assume  that $b$ is not  divisible by $2$.  Since  we are assuming
that $2$ is  primitive element of $\fp^*$, there  exists $t$ such that
$1<t<p-1$ and
\begin{equation}
\label{eqn01} 
b \equiv 2^t (\mod p).
\end{equation}

Let $b_0=p-b$. Since  $b$ is not divisible by $2$  we have that $b_0$
is divisible by  $2$. Let $b_0=2^rb_1$ where $r \in  \NN$ and $b_1$ is
not divisible by $2$.  If $b_1=1$, we are done. Otherwise,

\begin{claim*}
  $r<t$.
\end{claim*}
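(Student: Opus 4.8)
The plan is to prove the claim by a direct size comparison: $t$ is forced to be large --- at least $\log_2 p$ --- because $b$ is odd, whereas $r$ is forced to be small --- below $\log_2 p$ --- because $b_0<p$; these two facts are incompatible with $r\ge t$.

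In detail, I would first show that $2^t\ge p$. Suppose not, so that $2^t<p$. Then both $b$ and $2^t$ lie in $\{1,\ldots,p-1\}$ and are congruent modulo $p$ by \eqref{eqn01}, so $b=2^t$; but $t>1$, so $2^t$ is even, contradicting the standing assumption that $b$ is odd. Hence $2^t\ge p$. Next, since $b_0=2^rb_1$ with $b_1\ge 1$ we have $2^r\le b_0=p-b<p$. Combining the two estimates gives $2^r<p\le 2^t$, and since $x\mapsto 2^x$ is strictly increasing we conclude that $r<t$.

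I do not expect a genuine obstacle: the whole argument is two one-line inequalities. The step that needs a little care is the first, where $2^t\ge p$ is deduced purely from the parity of $b$; this is exactly the place where the reduction made just before the claim --- to the case in which $b$ is not divisible by $2$ --- is used. If a more self-contained version is wanted, one can argue by contradiction instead: if $r\ge t$ then $2^t\mid 2^r\mid b_0$, so $m:=b_0/2^t$ is a positive integer with $2^tm=p-b$; reducing modulo $p$ and using \eqref{eqn01} gives $m\equiv-1\pmod{p}$, hence $m=p-1$ since $0<m<p$, and then $p-b=2^t(p-1)\ge p-1$, that is $b\le 1$ --- contradicting $b>1$.
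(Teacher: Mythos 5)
Your proposal is correct, but it takes a genuinely different (and more direct) route than the paper. The paper proves the claim by contradiction inside modular arithmetic: writing $r=t+s$ it deduces that $p$ would have to divide $2^t(1+b_12^s)$, hence divide the positive integer $1+b_12^s$, which is smaller than $b+b_0=p$, an impossibility. You instead make a magnitude comparison: since $b$ is odd and $1<b<p$, the congruence \eqref{eqn01} forces $2^t\ge p$ (otherwise $b=2^t$ as integers, contradicting parity), while $2^r\le 2^rb_1=b_0=p-b<p$, so $2^r<p\le 2^t$ and $r<t$. This buys a slightly stronger, quantitative statement, namely $r<\log_2 p\le t$, and requires no case analysis; in particular it automatically covers the boundary case $r=t$, which the paper's argument, taking $s\in\NN=\{1,2,3,\ldots\}$, formally omits (that case is easily handled in the paper's style as well, since then $p$ would divide $1+b_1<p$). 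Your fallback argument, in which $m=b_0/2^t\equiv -1\pmod p$ forces $m=p-1$ and hence $b\le 1$, is essentially the paper's divisibility argument in different clothing. What the paper's version buys is that it stays entirely within the modular manipulations that mirror the algorithm's iteration (producing the updated relation $-b_1\equiv 2^{t-r}\pmod p$ used for the convergence count), whereas your main argument isolates the claim as a clean standalone inequality; both rest on the same elementary facts.
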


 \noindent Suppose not; let $r=t+s, s\in \NN$, then from \eqref{eqn01}
 we have,

\begin{equation}
\label{eqn03}
(-b_1)2^r  \equiv 2^t  (\mod p)  \Longrightarrow  p\; \mbox{divides}\;
2^t+b_1 2^{t+s} = 2^t (1+b_1 2^s)
\end{equation}
and hence $p$ divides $(1+b_1 2^s)$.  On the other hand $(1+b_1 2^s) <
b+b_0 =p$  and hence the only way  $p$ can divide $(1+b_1  2^s)$ is if
$1+b_1 2^s=0$ in $\ZZ$, which clearly is not the case.

So we have
\begin{equation}
\label{eqn04}
-b_1 \equiv 2^{t-r} (\mod p).
\end{equation}
Now we  are back  to \eqref{eqn01} with  $b=p-b_1$ and  $t=t-r$. Thus,
after at most $t$ iterations  the algorithm stops and returns value of
$\log_2 b$. \end{proof}
\section{Analysis of  The Algorithm}
\label{Analysis}

Throughout this section $p$ denotes odd primes for which $2$ is primitive element of $\fp.$ For a given $b \in \fps$, to compute $\log_2 b$, Algorithm.\ref{TheAlgo} repeats  steps $(6)-(8)$  each time  replacing $b$ by  $p-b'$ until
$b'=\pm 1(\mod p)$.  The space requirements to execute the algorithm are not significant, but the order of growth of computations  is  ${\rm O}(2^{(p-1)/2})$. This algorithm does not give any advantages over the existing algorithms in terms of complexity.  Our computational experiments with the algorithm suggested that while implementation of the algorithm worst case scenario (in terms of time taken to compute) occurred while calculating $\log_2((p-1)/2)$. However one can easily check,
$$
\log_2(\frac{p-1}{2})= \frac{p-3}{2}
$$

\section*{Acknowledgments} Author is thankful to    Mr. Ramanjulu for his remarks as well
as for his help in coding and computations.

\end{document}